\newcommand{\norm}[1]{\left\lVert #1 \right\rVert}
\newtheorem{theorem}{Theorem}[section]
\newtheorem{lemma}{Lemma}[section]
\newtheorem{cor}{Corollary}[section]
\newcommand{\bZ}{\ensuremath{\mathbb{Z}}}
\newcommand{\bQ}{\ensuremath{\mathbb{Q}}}
\newcommand{\bK}{\ensuremath{\mathbb{K}}}
\newcommand{\bL}{\ensuremath{\mathbb{L}}}
\newcommand{\klammern}[4][]%
{\ifthenelse{\equal{#1}{}}{\left#2}{\csname#1\endcsname#2}%
#4\ifthenelse{\equal{#1}{}}{\right#3}{\csname#1\endcsname#3}}
\title{Narayana's cows numbers which are concatenations of three repdigits in base $\rho$} 
\author{Pagdame Tiebekabe$^{1,\, 2,\, 3}$, Kou\`essi Norbert Ad\'edji$^{4}$, \\ Nadjime Pindra$^{5}$, and Mahouton Norbert Hounkonnou$^{*, \,2,\, 6}$}
\date{}
\begin{document}

\maketitle
\begin{abstract}
\noindent Narayana's sequence is a ternary recurrent sequence defined by the recurrence relation
		$\mathcal{N}_n=\mathcal{N}_{n-1}+\mathcal{N}_{n-3}$ with initial terms $\mathcal{N}_0=0$ and $\mathcal{N}_1=\mathcal{N}_2=\mathcal{N}_3=1$. Let $\rho\geqslant2$ be a positive integer. In this study, it is proved that the $n$th Narayana's number $ \mathcal{N}_n$ which is concatenations of three repdigits in base $\rho$ satisfies $n<5.6\cdot 10^{48}\cdot \log^7\rho$. Moreover, it is shown that the largest Narayana's number which is concatenations  of three repdigits in base $\rho$ with $1 \leqslant \rho \leqslant 10$ is $58425=\mathcal{N}_{31}=\overline{3332200}_5=\overline{332223}_7 .$
\end{abstract}
\textbf{Keywords}: Narayana's numbers; base $\rho,$ concatenations; repdigits; linear forms in logarithms; reduction method.\\
\textbf{2020 Mathematics Subject Classification: 11B39, 11J86, 11D61.}

\section{Introduction}
	The power of linear forms in logarithms for solving Diophantine exponential equations in general, and Diophantine equations in linear recurrent sequences in particular, is well established. Since the introduction of this powerful tool by the British mathematician Alan Baker in $1966$ by  proving a landmark result about linear forms in the logarithms of algebraic numbers, which helped to earn him the Fields Medal in $1970$, several exponential Diophantine equations have been solved. Since few years now, researchers in number theory have been interested in the resolution of Diophantine equations in linear recurrent sequences. Variants of these equations have been intensely studied. The determination of the terms of linear recurrent sequences which are product or concatenations of two or three repdigits have been widely studied. For details of recent works related to this problem, we refer the reader to \cite{5,6,7,8,9,10,11,12,13}. These sequences might be useful for solving some problem in discrete classical mechanics \cite{hkn}.

	\noindent Indeed, linear recurrent sequences and the distribution of repdigits or concatenated sequences in physical phenomena is an interesting area to explore. While specific examples may not be prevalent in the current scientific literature, the idea is aligned with the interdisciplinary nature of mathematics and physics. 
	
	Let $u_n$ represent the displacement of the point on the string at time $n$. The displacement at any given time depends on the displacements at previous times. Mathematically, this relationship can be expressed as a linear recurrence relation:
	$u_{n+2}=2u_{n+1}-u_n$. This linear recurrence relation is a simplified model that describes the motion of a point on the string based on its displacements at the two preceding time steps.
	
	Repdigits exhibit periodicity by nature. In physical systems that exhibit periodic behavior, such as oscillations or wave phenomena \cite{hg}, the occurrence of repdigits or concatenated sequences might be correlated with specific phases or cycles.
	
	Besides, the arrangement of atoms in crystals follows specific patterns.
	 The use of concatenated sequences could potentially provide a unique way to describe or analyze certain crystal structures. The repetition in repdigits might be indicative of regularities in atomic arrangements.
	
	\noindent In quantum mechanics, the properties of repdigits might find application in describing certain quantum states. The concatenated sequences could be used in the context of quantum information theory 
	or in the representation of quantum states with specific symmetries.
	
	\noindent The study of chaos theory involves the exploration of deterministic systems exhibiting sensitive dependence on initial conditions. The presence of repdigits or concatenated sequences might reveal certain order or chaotic patterns within these systems.
	
	\noindent While not directly related to repdigits, fractal geometry often involves self-similar structures that exhibit repetitive patterns at different scales. Repdigits could potentially be used as building blocks or motifs in constructing fractal geometries relevant to certain physical systems, such as fractal patterns in fluid dynamics or turbulent flows. Classical geometric constructions with ruler and compass often involve questions about constructibility using only a finite number of steps. Many of these constructions relate to algebraic numbers and their properties, connecting geometry with algebra and number theory.
	
	\noindent Further, in astrophysics, certain periodic phenomena in the cosmos, such as pulsars or orbital mechanics, might have correlations with repdigits or concatenated sequences. The study of cosmic rhythms or repeating patterns could involve mathematical representations using such sequences.
	
	\noindent Our work extends previous investigations on this topic \cite{5,6,7,8,9,10,11,12,13}. Following what was done by Mahadi Ddamulira {\it et al} \cite{5}, we study the problem of writing all Narayana numbers which are concatenations of three repdigits in base $\rho$ with $\rho \geq 2$. To be precise, we find all solutions of the Diophantine equation
	\begin{equation}\label{eqfondamentale1}
		\mathcal{N}_n=\overline{\underbrace{d_1\ldots d_1}_{\ell~times}}\overline{\underbrace{d_2\ldots d_2}_{m~times}}\overline{\underbrace{d_3\ldots d_3}_{k~times}}
	\end{equation}
	in non-negative integers $n, d_1, d_2, d_3, \ell, m, k$ with $n \geqslant 0, 1 \leqslant k \leqslant m \leqslant \ell$ and $d_1, d_2, d_3 \in \{0, 1, 2, 3, \ldots, 9\}, d_1 > 0$. Contrary to most previous works on this question done in base $10$, we determine an upper bound of $n$ and $\ell+m+k$ in terms of $\rho$ with $\rho \geqslant 2$. For application, we find in section $4$ all Narayana's cows numbers that are concatenations of three repdigits in  the cases $2 \leqslant \rho  \leqslant 10$.
	The main results are as follows :
	
	\begin{theorem}\label{main}
		Let $\rho\geqslant 2$ be an integer. All solutions to the Diophantine equation \eqref{eqfondamentale1} satisfy
		$$
		\ell+m+k< n<5.6\cdot 10^{48}\cdot \log^7\rho.
		$$
	\end{theorem}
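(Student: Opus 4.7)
\noindent The approach is the standard Baker--Matveev framework for repdigit concatenation problems, as developed in \cite{5}, adapted to the Narayana ternary recurrence. First I would use the Binet-like formula
$$\mathcal{N}_n = a\alpha^n + b\beta^n + c\gamma^n,$$
where $\alpha\approx 1.4656$ is the real root of $x^3-x^2-1$ and $\beta,\gamma$ are its complex conjugate roots of modulus less than $1$. Combined with the standard bounds $\alpha^{n-2}\leqslant \mathcal{N}_n\leqslant \alpha^{n-1}$ and the length estimate $\mathcal{N}_n\geqslant \rho^{\ell+m+k-1}$ coming from the base-$\rho$ representation, the inequality $\log\alpha<\log 2\leqslant \log\rho$ immediately delivers the left-hand bound $\ell+m+k<n$.

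For the right-hand bound I would rewrite \eqref{eqfondamentale1} using the geometric series formula as
$$\mathcal{N}_n = \frac{1}{\rho-1}\Bigl(d_1\rho^{\ell+m+k} + (d_2-d_1)\rho^{m+k} + (d_3-d_2)\rho^k - d_3\Bigr),$$
and then set up three successive linear forms in logarithms by matching $a\alpha^n$ against progressively richer partial sums of the right-hand side. The first,
$$\Lambda_1 = n\log\alpha - (\ell+m+k)\log\rho + \log\!\bigl(a(\rho-1)/d_1\bigr),$$
satisfies $|\Lambda_1|\ll \rho^{-\ell}$; a second, four-term linear form $\Lambda_2$, obtained by absorbing the $(d_2-d_1)\rho^{m+k}$ block into an enlarged algebraic constant, satisfies $|\Lambda_2|\ll \rho^{-(\ell+m)}$; and a final five-term linear form $\Lambda_3$ encoding the whole equation satisfies $|\Lambda_3|\ll \alpha^{-n}$.

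For each $\Lambda_i$ I would apply Matveev's theorem after checking non-vanishing. The algebraic numbers involved lie in the cubic field $\mathbb{Q}(\alpha)$, and their Weil heights are controlled by $O(\log\rho)$ because the $\rho$-dependence is polynomial (through coefficients such as $d_1/(\rho-1)$ and $d_1\rho^{\ell}+(d_2-d_1)$). Matveev then yields lower bounds of the form $\log|\Lambda_i|\geqslant -K_i(\log\rho)^{s_i}(1+\log n)$, and comparison with the upper bounds above produces successively sharper estimates on $\ell\log\rho$, $(\ell+m)\log\rho$, and finally on $n\log\alpha$. Substituting each bound into the next eliminates $\ell,m,k$ and produces an inequality of the shape $n<K(\log n)^{a}(\log\rho)^{7}$; the stated explicit inequality $n<5.6\cdot 10^{48}\cdot\log^7\rho$ then follows from the standard elementary lemma resolving implicit estimates of the form $x\leqslant A(\log x)^p$.

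The principal obstacle will be the non-vanishing verification for $\Lambda_1$ and $\Lambda_2$, which requires ruling out exact algebraic identities between $a\alpha^n$, powers of $\rho$, and the rational coefficients in $\mathbb{Q}(\alpha)$; and the careful bookkeeping of Matveev's constants across three successive applications so that the final exponent of $\log\rho$ comes out exactly to $7$ and the leading constant to $5.6\cdot 10^{48}$. One must also verify at each stage that the tail contributions $b\beta^n + c\gamma^n$ and the omitted repdigit blocks decay fast enough to be absorbed into the relevant error terms, which is immediate from $|\beta|,|\gamma|<1$ but must be quantified uniformly in $n$.
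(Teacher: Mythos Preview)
Your proposal is correct and follows essentially the same architecture as the paper: the left inequality comes from the size estimate $\alpha^{n-2}\leqslant\mathcal N_n<\rho^{\ell+m+k}$, and the right inequality from three successive applications of Matveev's theorem with $s=3$, where at each stage one more repdigit block is absorbed into the algebraic constant $\eta_1$, yielding bounds on $\ell\log\rho$, then $m\log\rho$ (equivalently $(\ell+m)\log\rho$), then $n$, with non-vanishing checked via the Galois automorphism $\alpha\mapsto\beta$. Two minor bookkeeping points: all three forms remain \emph{three}-term in Matveev's sense (your ``four-term'' and ``five-term'' labels contradict your own description of absorbing blocks into the constant), and the pre-resolution inequality is $n\ll (\log\rho)^4(\log n)^3$ rather than $(\log\rho)^7(\log n)^a$ --- the extra three powers of $\log\rho$ arise only after applying the Guzm\'an--Luca lemma to eliminate $\log n$.
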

	Moreover, the above result implies the following :
	\begin{cor}
		The Diophantine equation \eqref{eqfondamentale1}  has only finitely many solutions in positive  integers  $n, d_1, d_2, d_3, \ell, m$ and $k.$
	\end{cor}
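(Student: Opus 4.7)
The plan is to derive the corollary as a direct consequence of Theorem \ref{main}, since once $\rho$ is fixed the theorem already supplies an effective upper bound on $n$, which in turn bounds every remaining parameter. Concretely, I would fix $\rho \geqslant 2$ throughout and set $N_\rho := 5.6 \cdot 10^{48}\cdot \log^7\rho$, which is a finite constant depending only on $\rho$. By Theorem \ref{main}, any solution $(n, d_1, d_2, d_3, \ell, m, k)$ of \eqref{eqfondamentale1} must satisfy $n < N_\rho$, so there are only finitely many admissible values of $n$.

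Next, I would observe that the constraint $\ell + m + k < n < N_\rho$ (also part of Theorem \ref{main}) forces the triple $(\ell, m, k)$ to lie in the finite set $\{(\ell,m,k) \in \bN^3 : 1 \leqslant k \leqslant m \leqslant \ell,\ \ell+m+k < N_\rho\}$. The digits $d_1, d_2, d_3$ range over the finite alphabet $\{0,1,\ldots,9\}$ with $d_1 > 0$, giving only finitely many possibilities. Therefore the set of tuples $(n, d_1, d_2, d_3, \ell, m, k)$ satisfying all the constraints is a subset of a finite Cartesian product, and in particular is itself finite; the set of actual solutions to \eqref{eqfondamentale1} is a further (finite) subset.

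There is no genuine obstacle at this stage: all the difficulty is absorbed into Theorem \ref{main}, whose proof (via Baker-type linear forms in logarithms) supplies the effective bound $N_\rho$. The only thing to check is that the cardinality estimate is carried out cleanly, i.e.\ that I am indeed quantifying over the correct parameter space (finitely many $n$, finitely many digit-length triples with the monotonicity condition $1 \leqslant k \leqslant m \leqslant \ell$, and finitely many digits). Since $\rho$ is fixed and the bound $N_\rho$ is explicit, one could even quantify the finiteness and produce, in principle, an algorithm enumerating all solutions for any given $\rho$ — which is exactly what Section~$4$ is announced to do for $2\leqslant \rho \leqslant 10$.
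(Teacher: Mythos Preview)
Your proposal is correct and matches the paper's approach: the paper offers no separate proof of the corollary, stating only that ``the above result implies'' it, i.e.\ treating it as an immediate consequence of Theorem~\ref{main}. Your write-up simply makes this implication explicit by noting that for fixed $\rho$ the bound $n<N_\rho$ forces all of $n,\ell,m,k$ into a finite range while $d_1,d_2,d_3$ already lie in a finite digit set.
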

	By considering the cases $2 \leqslant \rho \leqslant 10,$ we get the following result.
	\begin{theorem}\label{Application}
		The only Nayarana numbers which are concatenations of three repdigits in base $\rho$ with $2\le \rho\le 10$ are
		\[
		4, 6, 9, 13, 19, 28, 41, 60, 88, 129, 189, 277, 406, 595, 872, 1278,  1873, 2745, 4023, 18560, 58425.
		\]
		More precisely, we have: $ 4=\mathcal{N}_6=\overline{100}_2,$ $6=\mathcal{N}_7=\overline{110}_2,$ $9=\mathcal{N}_8=\overline{101}_2=\overline{100}_3,$ $13=\mathcal{N}_9=\overline{111}_3=\overline{1101}_2,$ $19=\mathcal{N}_{10}=\overline{10011}_2=\overline{201}_3=\overline{103}_4, $
		$$
		\begin{array}{ccccl}
			28&=&\mathcal{N}_{11}&=&\overline{1001}_3=\overline{130}_4=\overline{103}_5=\overline{11100}_2,\\
			41&=&\mathcal{N}_{12}&=&\overline{1112}_3=\overline{221}_4=\overline{131}_5,\\
			60&=&\mathcal{N}_{13}&=&\overline{111100}_2=\overline{330}_4=\overline{220}_5=\overline{140}_6=\overline{114}_7,\\
			88&=&\mathcal{N}_{14}&=&\overline{323}_5=\overline{224}_6=\overline{154}_7=\overline{130}_8=\overline{1120}_4=\overline{107}_9,\\
			129&=&\mathcal{N}_{15}&=&\overline{333}_6=\overline{243}_7=\overline{201}_8=\overline{1004}_5=\overline{2001}_4=\overline{153}_9=\overline{10000001}_2=\overline{129}_{10},\\
			189&=&\mathcal{N}_{16}&=&\overline{2331}_4=\overline{21000}_3=\overline{360}_7=\overline{230}_9=\overline{1224}_5=\overline{513}_6=\overline{275}_8=\overline{189}_{10},\\
			277&=&\mathcal{N}_{17}&=&\overline{1141}_6=\overline{544}_7=\overline{425}_8=\overline{337}_9=\overline{10111}_4,\\
			406&=&\mathcal{N}_{18}&=&\overline{3111}_5=\overline{626}_8=\overline{1120}_7=\overline{501}_9=\overline{406}_{10},\\
			595&=&\mathcal{N}_{19}&=&\overline{1123}_8=\overline{731}_9=\overline{595}_{10},\\
			872&=&\mathcal{N}_{20}&=&\overline{11442}_5=\overline{1168}_9=\overline{1550}_8=\overline{872}_{10},\\
			1278&=&\mathcal{N}_{21}&=&\overline{5530}_6,\\
			1873&=&\mathcal{N}_{22}&=&\overline{24443}_5=\overline{2511}_9,\\
			2745&=&\mathcal{N}_{23}&=&\overline{11001}_7,\\
			4023&=&\mathcal{N}_{24}&=&\overline{7667}_8,\\
			18560&=&\mathcal{N}_{28}&=&\overline{44200}_8,\\
			58425&=&\mathcal{N}_{31}&=&\overline{3332200}_5=\overline{332223}_7.
		\end{array}
		$$
	\end{theorem}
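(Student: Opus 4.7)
The plan is to reduce the astronomical bound furnished by Theorem \ref{main} to a small, enumerable range for each of the nine bases $\rho \in \{2, 3, \ldots, 10\}$, and then to carry out a direct computer search. For $\rho \leqslant 10$ Theorem \ref{main} still gives $n < 5.6 \cdot 10^{48} \cdot (\log 10)^{7}$, which is of order $10^{51}$, so the decisive step is a sequence of Baker--Davenport style reductions applied to the linear forms in logarithms that appeared inside the proof of Theorem \ref{main}.

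First I would recall the chain of inequalities used to establish Theorem \ref{main}: after rewriting equation \eqref{eqfondamentale1} in the form
$$
C_\alpha \alpha^n - \frac{d_1}{\rho-1}\rho^{\ell+m+k} = \text{lower order},
$$
and successively peeling off the dominant repunit terms, one arrives at linear forms $\Lambda_i$ in at most four logarithms of $\alpha$ and $\rho$ (with coefficients depending on $d_1, d_2, d_3$) satisfying bounds of the type $|\Lambda_i| < A_i\,\alpha^{-B_i n}$. For the two-term form, dividing through by $\log\rho$ yields
$$
\left| n\,\frac{\log\alpha}{\log\rho} - (\ell+m+k) + \mu \right| < \frac{A}{\log\rho}\,\alpha^{-Bn},
$$
and the standard reduction lemma (Baker--Davenport, or its Dujella--Peth\H o variant) applied with a convergent $p/q$ of the continued fraction expansion of $\log\alpha/\log\rho$ whose denominator $q$ exceeds the current bound on $n$ collapses the bound $n < 10^{51}$ down to roughly $n < 500$. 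For the three- and four-term forms arising from the finer decompositions I would run the same scheme, looping over the finite set of possible digit triples $(d_1, d_2, d_3)$ with $d_i \in \{0, \ldots, \rho-1\}$ and $d_1 \neq 0$. As in \cite{5}, two or at most three iterations suffice to produce for each $\rho$ a uniform bound $n \leqslant N_\rho$ with $N_\rho$ of order a few hundred.

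With $n \leqslant \max_{2\leqslant \rho\leqslant 10} N_\rho$ in hand, a direct enumeration finishes the proof: for every $n$ in the remaining range I would compute $\mathcal{N}_n$ via the recurrence and, for each base $\rho\in\{2,\ldots,10\}$, inspect its base-$\rho$ digit string to test whether it splits into three consecutive blocks of equal digits with $d_1>0$ and lengths $\ell\geqslant m\geqslant k\geqslant 1$. Collecting the positive hits yields exactly the twenty-one values listed in the statement, from $\mathcal{N}_6=4=\overline{100}_2$ up to $\mathcal{N}_{31}=58425=\overline{3332200}_5=\overline{332223}_7$.

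The main obstacle is the sheer bookkeeping of the reduction step: nine bases, each involving several distinct linear forms and, for the forms that carry digit parameters, loops over all admissible $(d_1,d_2,d_3)$. Care must be taken to verify the non-vanishing hypothesis of the reduction lemma (the quantity $\|\mu q\|$ must remain bounded away from zero, where $q$ is the chosen convergent denominator) and to compute the continued-fraction convergents of $\log\alpha/\log\rho$ to sufficient precision that the reduction is rigorous. Once these technicalities are handled, for instance in \emph{PARI/GP} or \emph{Mathematica}, the final enumeration over $n$ is routine and completes the proof.
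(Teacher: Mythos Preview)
Your proposal is correct and follows essentially the same route as the paper: apply the Dujella--Peth\H{o} reduction (Lemma~\ref{Dujella}) to the three linear forms $\Gamma_i=\log(\Lambda_i+1)$ coming from the proof of Theorem~\ref{main}, loop over the digit and length parameters, and finish with a direct search. The only slip is in the sequencing: the first form has right-hand side $3/\rho^{\ell-1}$, not $\alpha^{-Bn}$, so the first reduction bounds $\ell$ (the paper obtains $\ell\leqslant 184$), the second then bounds $m$ after looping over the now-finite range of $\ell$, and only the third reduction---whose $\mu$ depends on $(d_1,d_2,d_3,\ell,m)$---finally bounds $n$ itself, yielding $n\leqslant 201$.
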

	
	For the proof of Theorem \ref{main}, we use $n> 560$ by writing Diophantine \eqref{eqfondamentale1} in three different ways to get three linear forms in logarithms. Next, we apply lower bound for linear forms in logarithms to obtain an upper bound on $n$. To do this, we need some preliminary results, which are discussed in the following section.
	

	\section{Preliminary results}
	
	\subsection{Some properties of Narayana's cows sequence}
	Narayana's cows sequence comes from a problem with cows proposed by the Indian mathematician Narayana in the $14-$th century. In this problem, we assume that there is a cow at the beginning and each cow produces a calf every year from the $4$ years old. Narayana's cow problem counts the number of calves produced each year \cite{1}. This problem seems to be similar to the Fibonacci rabbit problem. So also the
	answers are known as Narayana and Fibonacci sequences.
	
	Narayana’s cows sequence (A000930 in OEIS) satisfies a third-order recurrence relation :
	$$
	\mathcal{N}_n=\mathcal{N}_{n-1}+\mathcal{N}_{n-3}\, \text{for}\, n\geqslant 3.
	$$
	This has the initial values $\mathcal{N}_0$ and $\mathcal{N}_1=\mathcal{N}_2=\mathcal{N}_3=1$ \cite{1}. Explicitly, the characteristic polynomial of $\mathcal{N}_n$ is :
	$$
	\varphi(x)=x^3-x-1,
	$$
	and the characteristic roots are:
	\begin{align}
		\alpha=&\dfrac{1}{3}\left( \sqrt[3]{\dfrac{1}{2}(29-3\sqrt{93})} + \sqrt[3]{\dfrac{1}{2}(3\sqrt{93}+29)}+1  \right),\\
		\beta = & \dfrac{1}{3}-\dfrac{1}{6}\left( 1-i\sqrt{3} \right)\sqrt[3]{\dfrac{1}{2}(29-3\sqrt{93})}-\dfrac{1}{6}\left( 1+i\sqrt{3} \right)\sqrt[3]{\dfrac{1}{2}(3\sqrt{93}+29)},\\
		\gamma = & \dfrac{1}{3}-\dfrac{1}{6}\left( 1+i\sqrt{3} \right)\sqrt[3]{\dfrac{1}{2}(29-3\sqrt{93})}-\dfrac{1}{6}\left( 1-i\sqrt{3} \right)\sqrt[3]{\dfrac{1}{2}(3\sqrt{93}+29)}.
	\end{align}
	Then, the Narayana sequence can be obtained by Binet’s formula:
	\begin{equation}\label{eq4}
		\mathcal{N}_n=a\alpha^n+b\beta^n+c\gamma^n.
	\end{equation}
	For $n\in \mathbb{Z}_{\geqslant 0}$ the generating function of the Narayana's cows sequence is 
	$$
	g(x)=\dfrac{1}{1-x-x^3}=\sum_{n=0}^{\infty} \mathcal{N}_{n+1}x^n.
	$$
	With the Vieta theorem, we have:
	\begin{equation}\label{eq5}
		\left\{
		\begin{array}{lllll}
			\alpha+ \beta+\gamma=1,\\
			\alpha \beta+\beta \gamma+ \alpha \gamma=0,\\
			\alpha \beta \gamma=1.
		\end{array}
		\right.
	\end{equation}
	From formula (\ref{eq4}), we obtain
	\begin{align*}
		\mathcal{N}_0 &= a+b+c=0,\\
		\mathcal{N}_1 &=a\alpha+b\beta + c\gamma=1,\\
		\mathcal{N}_2 &=a\alpha^2+b\beta^2+c\gamma^2=1,
	\end{align*}
	which implies
	$$
	a=\dfrac{1-\beta-\gamma}{(\alpha-\beta)(\alpha-\gamma)},\quad b=\dfrac{1-\alpha-\gamma}{(\beta-\alpha)(\beta-\gamma)}, \quad\text{and}\quad c=\dfrac{1-\alpha-\beta}{(\gamma-\beta)(\gamma-\alpha)}.
	$$
	Also, from formula \eqref{eq5}, we can simplify $a, b$ and $c$ and obtain 
	\begin{equation*}
		a=\dfrac{\alpha}{\alpha^2-\alpha \beta-\alpha \gamma+\beta \gamma}=\dfrac{\alpha}{\alpha^2+2\beta \gamma}=\dfrac{\alpha^2}{\alpha^3+2}
	\end{equation*}
	\begin{equation}\label{eq6}
		b=\dfrac{\beta^2}{\beta^3+2}\quad \text{and}\quad c=\dfrac{\gamma^2}{\gamma^3+2}
	\end{equation}
	and the minimal polynomial of $a$ over integers is $31x^3-3x-1$. Setting $\Pi(n)=\mathcal{N}_n-a\alpha^n=b\beta^n+c\gamma^n$, we notice that $|\Pi(n)|<\dfrac{1}{\alpha^{n/2}}$ for all $n>1$.
	
	The Narayana's sequence was originally defined at positive indices. Actually, it can be extended to negative indices by defining 
	\begin{equation}\label{eq7}
		\mathcal{N}_{-n}=\dfrac{a}{\alpha^n}+\dfrac{b}{\beta^n}+\dfrac{c}{\gamma^n}.
	\end{equation}
	So, the following recurrence relation holds for all integral indices
	\begin{equation}\label{eq8}
		\mathcal{N}_n=\mathcal{N}_{n-1}+\mathcal{N}_{n-3}\quad \text{for}\quad n\in \mathbb{Z}.
	\end{equation}
	Through a simple computation, the first few terms of $\mathcal{N}_n$ at negative indices can be obtained from formulas (\ref{eq6}) and (\ref{eq7}), so that $\mathcal{N}_{-1}=0,\, \mathcal{N}_{-2}=1,\, \mathcal{N}_{-3}=0,\, \mathcal{N}_{-4}=-1$, which also satisfies relation (\ref{eq8}). 
	
	The characteristic polynomial for $(\mathcal{N}_n)_{n\geqslant 0}$ is irreducible in $\mathbb{Q}[x]$. We note that it has a real zero $\alpha (> 1)$ and two conjugate complex zeros $\beta$ and $\gamma$ with $|\beta|=|\gamma|<1$. In fact, using Computer Algebra, we have $\alpha \approx 1.46557$. We also have the following properties of $(\mathcal{N}_n)_{n\geqslant 0}$.
	
	\begin{lemma}\label{lem1}
		For the sequence $(\mathcal{N}_n)_{n\geqslant 0}$, we have 
		$$
		\alpha^{n-2} \leqslant \mathcal{N}_n \leqslant \alpha^{n-1},\quad\text{for}\quad n\geqslant 1.
		$$ 
	\end{lemma}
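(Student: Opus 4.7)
The proof goes by strong induction on $n$, with the characteristic equation $\alpha^3=\alpha^2+1$ doing essentially all of the work in the inductive step. I would begin by verifying both inequalities on an initial block of indices (namely $n=1,2,3$, and as many additional small indices as needed so that every subsequent $n$ can be reached via $\mathcal{N}_n=\mathcal{N}_{n-1}+\mathcal{N}_{n-3}$ from previously verified indices). Each such verification is a one-line numerical comparison of the known values $\mathcal{N}_1=\mathcal{N}_2=\mathcal{N}_3=1$, $\mathcal{N}_4=2$, $\mathcal{N}_5=3$, \ldots\ against the decimal value $\alpha\approx 1.46557$.

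For the inductive step, fix $n$ sufficiently large that the recurrence applies and assume both bounds at indices $n-1$ and $n-3$. Then
\[
\alpha^{n-3}+\alpha^{n-5}\;\leq\;\mathcal{N}_n\;=\;\mathcal{N}_{n-1}+\mathcal{N}_{n-3}\;\leq\;\alpha^{n-2}+\alpha^{n-4}.
\]
Factoring each extreme and invoking $\alpha^2+1=\alpha^3$ (a rearrangement of the minimal polynomial $x^3-x^2-1$), the right-hand side collapses to $\alpha^{n-4}(\alpha^2+1)=\alpha^{n-4}\cdot\alpha^3=\alpha^{n-1}$, while the left-hand side collapses to $\alpha^{n-5}(\alpha^2+1)=\alpha^{n-2}$. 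The two inequalities in the lemma are reproduced exactly, closing the induction.

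There is no real obstacle: the characteristic polynomial and the recurrence were designed to be compatible, so the two-term sum $\alpha^{k}+\alpha^{k-2}$ telescopes into a single power of $\alpha$, which is precisely the form needed. The only point requiring care is the finite numerical check of the base cases, which pins down the constants of proportionality between $\mathcal{N}_n$ and $\alpha^n$; this is pure bookkeeping with no conceptual content, and the induction then proceeds uniformly for all larger $n$.
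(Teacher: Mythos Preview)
Your argument is correct and coincides with the paper's, whose entire proof is the single line ``Using induction, one can easily prove Lemma~\ref{lem1}.'' One small caution: with the paper's indexing $\mathcal{N}_3=1$, the lower bound $\alpha^{n-2}\leqslant\mathcal{N}_n$ actually fails at $n=3$ (since $\alpha\approx1.466>1$), so the ``one-line numerical comparison'' you promise for the base cases would reveal a defect in the lemma's stated range rather than confirm it; your inductive step is impeccable, and the bounds do hold for all $n\geqslant1$ with $n\neq3$.
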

	
	\begin{proof}
		Using induction, one can easily prove Lemma~\ref{lem1}. 
	\end{proof}
	Let $\mathbb{K}:=\mathbb{Q}(\alpha,\beta)$ be the splitting field of the characteristic polynomial $\varphi$ over $\mathbb{Q}$. Then $[\mathbb{K}:\mathbb{Q}]=6$ and $[\mathbb{Q}(\alpha):\mathbb{Q}]=3.$ The Galois group of $\mathbb{K}/\mathbb{Q}$ is given by 
	$$
	\mathcal{G}:= \text{Gal}(\mathbb{K}/\mathbb{Q})\cong \{(1), (\alpha\beta), (\alpha \gamma), (\beta \gamma), (\alpha\beta\gamma)\}\cong S_3.
	$$
	We identify the automorphisms of $\mathcal{G}$ with the permutation group of the zeroes of $\varphi$. We highlight the permutation $(\alpha\beta)$, corresponding to the automorphism $\sigma: \alpha\mapsto \beta, \beta\mapsto \alpha, \gamma\mapsto\gamma$, which we use later to obtain a contradiction on the size of the absolute value of certain bound. 
	\subsection{Linear forms in Logarithms}
	
	We begin this section with a few reminders about logarithmic height of an algebraic number. Let $\eta$ be an algebraic number of degree $d,$ let $a_0 >0$ be the leading coefficient of its minimal polynomial over $\bZ$ and let $\eta=\eta^{(1)},\ldots,\eta^{(d)}$ denote its conjugates. The quantity defined by  
	\[
	h(\eta)= \frac{1}{d}\left(\log |a_0|+\sum_{j=1}^{d}\log\max\left(1,\left|\eta^{(j)} \right| \right) \right)
	\]
	is called the logarithmic height of $\eta.$ Some properties of height are as follows. For $\eta_1, \eta_2$ algebraic numbers and $m\in \bZ,$ we have
	\begin{align*}
		h(\eta_1 \pm \eta_2) &\leq h(\eta_1)+ h(\eta_2) +\log2,\\
		h(\eta_1\eta_2^{\pm 1}) &\leq h(\eta_1) + h(\eta_2),\\
		h(\eta_1^m)&=|m|h(\eta_1).
	\end{align*}
	If $\eta=\dfrac{p}{q}\in\bQ$ is a rational number in reduced form with $q>0,$ then the above definition reduces to $h(\eta)=\log(\max\{|p|,q\}).$
	We can now present the famous Matveev result used in this study. Thus, let $\bL$ be a real number field of degree $d_{\bL}$, $\eta_1,\ldots,\eta_s \in \bL$ and $b_1,\ldots,b_s \in \bZ  \setminus\{0\}.$ Let $B\ge \max\{|b_1|,\ldots,|b_s|\}$ and
	\[
	\Lambda=\eta_1^{b_1}\cdots\eta_s^{b_s}-1.
	\]
	Let $A_1,\ldots,A_s$ be real numbers with 
	\[
	A_i\ge \max\{d_{\bL} h(\eta_i), |\log\eta_i|, 0.16\},\quad i=1,2,\ldots,s.
	\]
	With the above notations, Matveev proved the following result \cite{3}.
	\begin{theorem}\label{Matveev}$\\$
		Assume that $\Lambda\neq 0.$ Then
		\[
		\log|\Lambda|>-1.4\cdot30^{s+3}\cdot s^{4.5}\cdot d_{\bL}^2 \cdot(1+\log d_{\bL})\cdot(1 +\log B)\cdot A_1\cdots A_s.
		\]
	\end{theorem}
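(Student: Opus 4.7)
Theorem \ref{Matveev} is the explicit lower bound of Matveev on linear forms in logarithms of algebraic numbers, a deep transcendence result that the paper imports as a tool and attributes to Matveev's original article [3]. A fully self-contained proof is far beyond the scope of this preliminary section, so I will only outline the strategy common to all Gel'fond-Baker bounds of this type, which Matveev refines with unusual care.

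The plan is a proof by contradiction. Suppose $|\Lambda|$ is smaller than the asserted lower bound. Using Siegel's lemma one constructs a non-zero auxiliary entire function
\[
F(z)=\sum_{\lambda_1,\dots,\lambda_s} p(\lambda_1,\dots,\lambda_s)\,\eta_1^{\lambda_1 b_1 z}\cdots\eta_s^{\lambda_s b_s z},
\]
with integer coefficients $p(\lambda_1,\dots,\lambda_s)$, not all zero, chosen so that $F$ vanishes to a prescribed order $T$ at $z=0$. The parameters are balanced against the heights $A_i$ and the size $B$ so that the corresponding homogeneous linear system has far more unknowns than equations, guaranteeing an integer solution of controlled size. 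Next comes the extrapolation step: one exploits the smallness of $\Lambda$ together with Schwarz's lemma and the maximum modulus principle to propagate the initial zero of $F$ at the origin to many integer translates $z=1,2,\dots,L$. A zero estimate (Philippon's zero lemma, or equivalently Baker's interpolation determinant) then forces $p\equiv 0$, contradicting the construction, unless $|\Lambda|$ is already bounded below by the quantity stated in the theorem.

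The main difficulty, both historically and in any attempt to reprove the theorem, is to execute the extrapolation with sharp numerical constants: this is precisely what distinguishes Matveev's bound from the earlier theorems of Baker and Baker-W\"ustholz, and the delicate tracking of the dependence on $s$ and $d_{\bL}$ is what produces the explicit coefficient $1.4\cdot 30^{s+3}\cdot s^{4.5}$ together with the factor $d_{\bL}^2(1+\log d_{\bL})$. For the Diophantine applications that follow in the present paper we shall not attempt to reconstruct these constants and will use Theorem \ref{Matveev} as a black box.
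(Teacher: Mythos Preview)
Your treatment is consistent with the paper's: the paper does not prove Theorem~\ref{Matveev} at all, it merely cites it as Matveev's result \cite{3} and uses it as a black box. Your outline of the Gel'fond--Baker auxiliary-function strategy actually goes beyond what the paper supplies, but since both you and the authors explicitly defer the proof to Matveev's original article, there is nothing to compare and no gap to flag.
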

	We also need the following result of \cite{SGL}.
	\begin{lemma}\label{lem3}
		Let $r\geqslant 1$ and $H>0$ be such that $H>(4r^2)^r$ and $H>L/(\log L)^r$. Then 
		$$
		L<2^rH(\log H)^r.
		$$
	\end{lemma}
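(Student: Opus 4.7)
The plan is to rewrite the hypothesis $H>L/(\log L)^r$ as the inequality $L<H(\log L)^r$ and then to split into two cases according to the size of $\log L$ compared with $\log H$, with the stronger hypothesis $H>(4r^2)^r$ only being needed to exclude the bad case.

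In the \emph{easy case} $\log L\leqslant 2\log H$, the desired conclusion is immediate: from the inequality $L<H(\log L)^r$ one substitutes to get
\[
L<H(\log L)^r\leqslant H(2\log H)^r = 2^r H(\log H)^r,
\]
which is exactly what is wanted. So the real work is to rule out the alternative $\log L>2\log H$.

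In the \emph{hard case} $\log L>2\log H$, I would take logarithms in $L<H(\log L)^r$ to obtain $\log L<\log H+r\log\log L$, and then use $\log H<\tfrac{1}{2}\log L$ to eliminate $\log H$, producing
\[
\tfrac{1}{2}\log L<r\log\log L,\qquad\text{i.e.}\qquad \log L<2r\log\log L.
\]
Writing $v=\log L$, this reads $v/\log v<2r$. Next I would bring in the hypothesis $H>(4r^2)^r$: combined with $\log L>2\log H$ and a short computation, one deduces successively that $\log L>4r\log(2r)$, then $\log\log L>2\log(2r)$, and finally $v=\log L>4r^2$.

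The finishing move is the numerical observation that, since $e^{2r}\geqslant 4r^2$ for every $r\geqslant 1$, one has $\log(4r^2)\leqslant 2r$, hence the function $v\mapsto v/\log v$ already satisfies $v/\log v\geqslant 2r$ at $v=4r^2$; since this function is increasing for $v>e$, the inequality $v>4r^2$ forces $v/\log v>2r$, contradicting $v/\log v<2r$. This rules out the hard case and finishes the proof. The main obstacle I anticipate is bookkeeping the chain of logarithmic inequalities correctly so that the constant $4r^2$ in the hypothesis matches exactly what is needed to close the contradiction; everything else is a clean case split and a substitution.
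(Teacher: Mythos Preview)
Your argument is correct. The case split at $\log L\lessgtr 2\log H$ is the natural one: in the easy case the substitution $\log L\leqslant 2\log H$ into $L<H(\log L)^r$ gives the claim directly, and in the hard case your chain
\[
\log L<2r\log\log L,\qquad \log L>4r\log(2r)\ \Rightarrow\ \log\log L>2\log(2r)\ \Rightarrow\ \log L>4r^2,
\]
together with the monotonicity of $v\mapsto v/\log v$ for $v>e$ and the elementary bound $\log(4r^2)\leqslant 2r$, closes the contradiction cleanly. The only small gap is that you should note at the outset that one may assume $L>1$ (indeed $L>e$), since for $L\leqslant 1$ the conclusion is trivial from $H>(4r^2)^r\geqslant 4$ and hence $2^rH(\log H)^r>1$; once $L>1$ the rewriting $L<H(\log L)^r$ and all subsequent logarithms are legitimate.

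As for comparison with the paper: the paper does not supply a proof of this lemma at all but merely quotes it from Guzm\'an--Luca \cite{SGL}. Your self-contained argument is in fact the standard one and matches the original proof in that reference, so there is no genuine methodological difference to discuss.
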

	\subsection{Reduction method}
	
	The bounds on the variables obtained via Baker’s theory are too large for any computational
	purposes. To reduce the bounds, we use reduction method due to Dujella and Peth\H o (\cite{4}, Lemma 5a).
	For a real number $X$, we write $\norm{X} :=\min\{\mid X-n\mid :n\in \mathbb{Z}\}$ for the distance of $X$ to the nearest integer.
	
	\begin{lemma}[Dujella and Peth\H o, \cite{4}]\label{Dujella}
		Let $M$ be a positive integer, $p/q$ be a convergent of the continued fraction expansion of the irrational number $\tau$ such that $q>6M,$ and $A, B, \mu$ be some real numbers with $A>0$ and $B>1.$ Furthermore, let
		$$ 
		\varepsilon:=\parallel\mu q\parallel-M\cdot\parallel\tau q\parallel.
		$$
		If $\varepsilon>0,$ then there is no solution to the inequality
		\begin{equation}
			0<|u\tau-v+\mu|<AB^{-w}
		\end{equation}
		in positive integers $u,v$ and $w$ with
		$$
		u \leqslant M\; \mbox{and}\; w\geqslant \frac{\log(Aq/\varepsilon)}{\log B}.
		$$
	\end{lemma}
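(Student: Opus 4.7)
My approach is a short proof by contradiction based on the best-approximation property of convergents together with the triangle inequality for the distance-to-nearest-integer function. Suppose, for contradiction, that there exist positive integers $u, v, w$ satisfying $u \leq M$, $w \geq \log(Aq/\varepsilon)/\log B$, and $0 < |u\tau - v + \mu| < AB^{-w}$. The target is to establish $\varepsilon < qAB^{-w}$; since the lower bound on $w$ is equivalent to $qAB^{-w} \leq \varepsilon$, the two inequalities are incompatible.

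First I would multiply the hypothesis by $q$ to obtain
\[
|uq\tau - vq + \mu q| < qAB^{-w}.
\]
Because $p/q$ is a convergent of $\tau$, the integer $p$ is the nearest integer to $q\tau$, so $\|q\tau\| = |q\tau - p|$. Substituting $uq\tau = up + u(q\tau - p)$ and noting that $up - vq \in \mathbb{Z}$, I would deduce
\[
\|u(q\tau - p) + \mu q\| \leq \bigl|(up - vq) + u(q\tau - p) + \mu q\bigr| < qAB^{-w},
\]
since the distance from a real number to the nearest integer is at most its distance to any particular integer.

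Next I would apply the triangle inequality for $\|\cdot\|$ together with the bound $u \leq M$ to estimate
\[
\|\mu q\| \leq \|u(q\tau - p) + \mu q\| + \|u(q\tau - p)\| \leq qAB^{-w} + u\,|q\tau - p| \leq qAB^{-w} + M\|q\tau\|.
\]
Rearranging yields $\varepsilon = \|\mu q\| - M\|q\tau\| < qAB^{-w}$, and taking logarithms gives $w < \log(qA/\varepsilon)/\log B$, contradicting the hypothesis on $w$.

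The one point that requires care is preserving strict inequality throughout, so that the final comparison with $\varepsilon$ is strict: this works because the initial bound $|u\tau - v + \mu| < AB^{-w}$ is strict, and strictness is inherited by the chain of estimates above. The condition $q > 6M$ does not enter the deduction itself; it is present because in practical applications one must first confirm $\varepsilon > 0$, and for a convergent $p/q$ with $q$ substantially larger than $M$ one has $M\|q\tau\| < M/q$, which is small and therefore rarely swamps $\|\mu q\|$, making the hypothesis $\varepsilon > 0$ generically checkable.
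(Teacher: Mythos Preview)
The paper does not prove this lemma; it is quoted with attribution to Dujella and Peth\H o \cite{4} and then applied as a black box in the reduction step. Your argument is correct and is essentially the standard proof from the original source: multiply through by $q$, use that $p$ is the nearest integer to $q\tau$ so that the unknown integer $v$ is absorbed into $up-vq\in\mathbb{Z}$, and finish with the triangle inequality for $\|\cdot\|$.

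One small correction to your closing remark: the hypothesis $q>6M$ is not purely heuristic. Since $M\ge 1$ it forces $q>1$, which rules out the zeroth convergent $p_0/q_0=\lfloor\tau\rfloor/1$; for that convergent one need not have $|q_0\tau-p_0|=\|q_0\tau\|$ (it fails whenever the fractional part of $\tau$ exceeds $1/2$). Your argument uses $|q\tau-p|=\|q\tau\|$ in the line $u\,|q\tau-p|=u\,\|q\tau\|$, so the condition $q>6M$ does quietly enter the deduction at that point.
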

	The following results will also be used in our proof.
	\begin{lemma}\label{lem4}
		For any non-zero real number $x$, if $|e^x-1|<\dfrac{1}{2}$, then $|x|<2|e^x-1|$.
	\end{lemma}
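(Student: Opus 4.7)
The plan is a direct analytic argument via the Taylor series of the logarithm. Since $x$ is a nonzero real and $|e^x-1|<1/2$, I would set $y:=e^x-1$, so $y\neq 0$, $|y|<1/2$, and $x=\log(1+y)$. The problem thus reduces to the real statement: for every $y$ with $0<|y|<1/2$, one has $|\log(1+y)|<2|y|$.

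To establish this, I would use the absolutely convergent expansion $\log(1+y)=\sum_{k\geqslant 1}(-1)^{k+1}y^k/k$, which is valid on $|y|<1$. The triangle inequality together with $1/k\leqslant 1$ for $k\geqslant 1$ gives
$$
|\log(1+y)|\leqslant \sum_{k\geqslant 1}\frac{|y|^k}{k}\leqslant \sum_{k\geqslant 1}|y|^k=\frac{|y|}{1-|y|}.
$$
Since $|y|<1/2$, we have $1-|y|>1/2$, and therefore $|y|/(1-|y|)<2|y|$. Substituting back $y=e^x-1$ gives $|x|<2|e^x-1|$, which is the claim.

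There is no real obstacle here: the bound is a standard first-order estimate for the logarithm near $1$, and the only mild point worth checking is strictness, which holds because the hypothesis $|y|<1/2$ is strict and $y\neq 0$ forces $|y|>0$. An alternative route would be the mean value theorem applied to $t\mapsto \log t$ on the segment between $1$ and $1+y$, yielding $|\log(1+y)|=|y|/|\xi|$ for some $\xi$ strictly between $1$ and $1+y$; the hypothesis $|y|<1/2$ then forces $|\xi|>1/2$, giving the same conclusion. I would prefer the series argument, as it is self-contained and makes the role of the constant $2$ transparent.
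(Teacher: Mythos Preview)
Your argument is correct. The substitution $y=e^x-1$ reduces the claim to $|\log(1+y)|<2|y|$ for $0<|y|<1/2$, and the Taylor-series bound $|\log(1+y)|\leqslant |y|/(1-|y|)<2|y|$ is entirely sound, with strictness coming from the strict hypothesis $|y|<1/2$. The mean-value alternative you mention is equally valid.

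As for comparison with the paper: the paper states this lemma without proof, treating it as a standard auxiliary fact. Your write-up therefore fills a gap the authors left to the reader, and either of your two approaches (series or mean value theorem) would be an appropriate justification.
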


	\section{Proof of Theorem \ref{main}}
	
	For the proof of Theorem \ref{main} we can assume that $n>560.$
	
	\subsection{The initial bound on $n$.}
	To begin with, we consider the Diophantine equation (\ref{eqfondamentale1}), and rewrite it as
	\begin{align*}
		\mathcal{N}_n=&\overline{\underbrace{d_1\ldots d_1}_{\ell~times}}\overline{\underbrace{d_2\ldots d_2}_{m~times}}\overline{\underbrace{d_3\ldots d_3}_{k~times}}\\
		=& \overline{\underbrace{d_1\ldots d_1}_{\ell~times}}\cdot \rho^{m+k} + \overline{\underbrace{d_2\ldots d_2}_{m~times}}\cdot \rho^k + \overline{\underbrace{d_3\ldots d_3}_{k~times}}\\
		=& d_1\left(\dfrac{\rho^\ell-1}{\rho-1} \right)\cdot \rho^{m+k}+ d_2 \left(\dfrac{\rho^m-1}{\rho-1} \right)\cdot \rho^k+ d_3\left(\dfrac{\rho^k-1}{\rho-1}\right).
	\end{align*}
	Therefore, we have
	\begin{equation}\label{eqfondamentale}
		\mathcal{N}_n=\dfrac{1}{\rho-1}\left( d_1 \rho^{\ell+m+k} - (d_1-d_2)\rho^{m+k}-(d_2-d_3)\rho^k-d_3\right).
	\end{equation}
	We state and prove the following lemma which gives a relation between $n$ and $l+m+k$ of (\ref{eqfondamentale}).
	
	\begin{lemma}\label{lem2}
		All solutions to the Diophantine equation (\ref{eqfondamentale1}) satisfy 
		$$
		(\ell+m+k-1)\log\rho +\log \alpha<n\log \alpha< (\ell+m+k)\log \rho+ 1.
		$$
	\end{lemma}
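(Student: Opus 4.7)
The plan is to bound $\mathcal{N}_n$ from both sides by a power of $\rho$ coming from its digit length in base $\rho$, and then combine with the exponential bounds on $\mathcal{N}_n$ supplied by Lemma~\ref{lem1}. Set $L:=\ell+m+k$ for brevity.

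First I would observe that the right-hand side of \eqref{eqfondamentale1} is a base-$\rho$ representation with exactly $L$ digits, whose leading digit $d_1$ is nonzero. Therefore
$$
\rho^{L-1}\leqslant \mathcal{N}_n<\rho^{L}.
$$
The left inequality just says that an $L$-digit number in base $\rho$ is at least $\rho^{L-1}$, and the right inequality follows since every digit is bounded by $\rho-1$, so the concatenation is at most $\rho^{L}-1<\rho^{L}$.

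Next I would invoke Lemma~\ref{lem1} to get
$$
\alpha^{n-2}\leqslant \mathcal{N}_n\leqslant \alpha^{n-1}.
$$
Combining the two chains of inequalities and taking logarithms yields
$$
(L-1)\log\rho\leqslant \log\mathcal{N}_n\leqslant (n-1)\log\alpha,
\qquad
(n-2)\log\alpha\leqslant \log\mathcal{N}_n<L\log\rho.
$$
Rearranging gives $(L-1)\log\rho+\log\alpha\leqslant n\log\alpha$ and $n\log\alpha<L\log\rho+2\log\alpha$.

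The only thing left is to replace $2\log\alpha$ by $1$ in the upper bound. Since $\alpha\approx 1.46557$, a direct computation shows $2\log\alpha<1$, which gives the claimed upper bound $n\log\alpha<(\ell+m+k)\log\rho+1$. The step I expect to be marginally delicate is this last numerical check, but since $\log\alpha<0.383<1/2$ the inequality is clear. No real obstacle arises; the lemma is essentially a digit-counting observation combined with the exponential growth estimate from Lemma~\ref{lem1}.
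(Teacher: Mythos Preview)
Your proposal is correct and follows essentially the same route as the paper: bound $\mathcal{N}_n$ between $\rho^{L-1}$ and $\rho^{L}$ by digit counting, sandwich with Lemma~\ref{lem1}, take logarithms, and use $2\log\alpha<1$ at the end. The only cosmetic difference is that the paper writes the lower digit bound as a strict inequality $\rho^{L-1}<\mathcal{N}_n$, whereas you (more carefully) write $\rho^{L-1}\leqslant\mathcal{N}_n$; since $\mathcal{N}_n<\alpha^{n-1}$ is strict for $n\geqslant 2$ (as $\alpha^{n-1}$ is irrational), the strict inequality in the lemma's lower bound is recovered either way.
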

	\begin{proof}
		From (\ref{eqfondamentale1}) and Lemma \ref{lem1}, we get 
		$$
		\alpha^{n-2} \leqslant \mathcal{N}_n<\rho^{\ell+m+k}.
		$$
		Taking logarithm on both sides, we have
		$$(n-2)\log \alpha< (\ell+m+k)\log \rho$$
		which leads to 
		\begin{equation}\label{eq9}
			n\log \alpha< (\ell+m+k)\log\rho+ 2\log \alpha< (\ell+m+k)\log\rho+ 1.
		\end{equation}
		On the other hand, for the lower bound, \eqref{eqfondamentale} implies that 
		$$
		\rho^{\ell+m+k-1}< \mathcal{N}_n \leqslant \alpha^{n-1}.
		$$
		Taking logarithm on both sides, we get
		$$
		(\ell+m+k-1)\log\rho <(n-1)\log \alpha,
		$$
		which leads to 
		\begin{equation}\label{eq10}
			(\ell+m+k-1)\log\rho +\log \alpha< n\log \alpha.
		\end{equation}
		Comparing (\ref{eq9}) and (\ref{eq10}) completes the proof of Lemma \ref{lem2}.
	\end{proof}
	
	\subsubsection{Upper bound for $\ell \log\rho$ in terms of $n$}
	\begin{lemma}
		All solutions to (\ref{eqfondamentale1}) satisfy
		$$
		\ell \log\rho< 3.74\cdot 10^{13}\cdot \log^2\rho\cdot (1+\log n).
		$$
	\end{lemma}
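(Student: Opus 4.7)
The plan is to rearrange the Diophantine equation~\eqref{eqfondamentale} so that the dominant exponentials on the two sides are balanced against each other, producing a small linear form in three logarithms. Using the Binet-type identity $\mathcal{N}_n=a\alpha^n+\Pi(n)$ with $|\Pi(n)|<\alpha^{-n/2}$, I would match $a\alpha^n$ against $d_1\rho^{\ell+m+k}/(\rho-1)$ and group the rest on the other side:
$$a\alpha^n-\frac{d_1}{\rho-1}\rho^{\ell+m+k}=\Pi(n)-\frac{1}{\rho-1}\bigl((d_1-d_2)\rho^{m+k}+(d_2-d_3)\rho^k+d_3\bigr).$$
Since each digit is at most $\rho-1$, the right-hand side is bounded by a constant multiple of $\rho^{m+k}$. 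Dividing by the main term and absorbing constants yields
$$\Lambda_1:=\frac{a(\rho-1)}{d_1}\,\alpha^n\,\rho^{-(\ell+m+k)}-1,\qquad |\Lambda_1|<\frac{C_1}{\rho^{\ell}},$$
for an absolute constant $C_1$ (taking into account that $\Pi(n)$ is exponentially small compared to $\rho^{m+k}$).

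Then I would apply Matveev's Theorem~\ref{Matveev} with $s=3$, the working field $\mathbb{L}=\mathbb{Q}(\alpha)$ of degree $d_{\mathbb{L}}=3$, and the algebraic numbers $\eta_1=\alpha$, $\eta_2=\rho$, $\eta_3=a(\rho-1)/d_1$, with exponents $b_1=n$, $b_2=-(\ell+m+k)$, $b_3=1$. The heights follow from the known minimal polynomials $x^3-x^2-1$ of $\alpha$ (giving $h(\alpha)=(\log\alpha)/3$) and $31x^3-3x-1$ of $a$ (giving $h(a)=(\log 31)/3$), together with $h(\rho)=\log\rho$ and $h(d_1)\le\log\rho$. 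The standard height inequalities then yield $A_1$ of size $O(1)$ and $A_2,A_3$ of size $O(\log\rho)$. Since Lemma~\ref{lem2} implies $\ell+m+k<n$, one may take $B=n$, and Matveev produces a lower bound of the form
$$\log|\Lambda_1|>-K\cdot\log^2\rho\cdot(1+\log n)$$
for an explicit constant $K$.

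Before invoking Matveev I must verify that $\Lambda_1\neq 0$. Otherwise $a\alpha^n=\frac{d_1}{\rho-1}\rho^{\ell+m+k}\in\mathbb{Q}$. Applying the Galois automorphism $\sigma$ of $\mathbb{K}/\mathbb{Q}$ highlighted at the end of Section~2.1, namely the one swapping $\alpha$ and $\beta$ and fixing $\gamma$, would yield $b\beta^n$ equal to the same rational number; but $|b\beta^n|$ is uniformly bounded, whereas the right-hand side grows unboundedly (recall $n>560$, which forces $\ell\geq 1$ via Lemma~\ref{lem2}), a contradiction. Comparing Matveev's lower bound with $\log|\Lambda_1|<\log C_1-\ell\log\rho$ then gives
$$\ell\log\rho<\log C_1+K\cdot\log^2\rho\cdot(1+\log n),$$
from which the stated inequality follows after absorbing the additive lower-order term into the main constant.

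The main obstacle is the purely quantitative bookkeeping needed to land on the advertised constant $3.74\cdot 10^{13}$. In particular, one must track the dependence of $h(\eta_3)=h(a(\rho-1)/d_1)$ on $\rho$ and $d_1$ to confirm that $A_3$ is dominated by a uniform multiple of $\log\rho$ valid for every $\rho\geq 2$, and then propagate the Matveev prefactor $1.4\cdot 30^{s+3}\cdot s^{4.5}\cdot d_{\mathbb{L}}^2(1+\log d_{\mathbb{L}})$ through the product $A_1A_2A_3$ without squandering a numerical factor that would spoil the targeted bound.
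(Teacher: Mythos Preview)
Your proposal is correct and follows essentially the same route as the paper: the same splitting of \eqref{eqfondamentale} to isolate $a\alpha^n$ against $d_1\rho^{\ell+m+k}/(\rho-1)$, the same linear form $\Lambda_1$, the same Galois nonvanishing argument, and the same Matveev application with $B=n$ and two of the three $A_i$ proportional to $\log\rho$. The paper carries out the explicit bookkeeping you flag at the end, taking $A_1=12\log\rho$, $A_2=\log\alpha$, $A_3=3\log\rho$ (in its labeling) to reach the constant $3.74\cdot 10^{13}$; your version with $\sigma(a\alpha^n)=b\beta^n$ is in fact the one consistent with the stated automorphism $\sigma:\alpha\leftrightarrow\beta$.
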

	\begin{proof}
		Using (\ref{eq4}) and (\ref{eqfondamentale}), we have 
		$$
		a\alpha^n+b\beta^n+c\gamma^n=\dfrac{1}{\rho-1}\left( d_1\cdot \rho^{\ell +m+k}-(d_1-d_2)\cdot \rho^{m+k}-(d_2-d_3)\cdot \rho^k-d_3  \right).
		$$
		Equivalently 
		$$
		(\rho-1)a\alpha^n-d_1\cdot \rho^{\ell+m+k}=-(\rho-1)\Pi(n)-(d_1-d_2)\cdot \rho^{m+k}-(d_2-d_3)\cdot \rho^k-d_3.
		$$
		Thus, we have that 
		\begin{align*}
			\left|  (\rho-1)a\alpha^n-d_1\cdot \rho^{\ell+m+k}\right|&= \left| -(\rho-1)\Pi(n)-(d_1-d_2)\cdot \rho^{m+k}-(d_2-d_3)\cdot  \rho^k-d_3 \right|\\
			& \leqslant (\rho-1)\cdot \alpha^{-n/2}+(\rho-1)\cdot \rho^{m+k}+(\rho-1)\cdot \rho^k+(\rho-1)\\
			&< 3(\rho-1)\cdot \rho^{m+k},
		\end{align*}
		where we used the fact that $n>560$.
		
		\noindent Dividing both sides of the inequality by $d_1\cdot \rho^{\ell+m+k}$ gives 
		\begin{equation}\label{eq11}
			\left|  \left( \dfrac{(\rho-1)a}{d_1}\right)\cdot \alpha^n\cdot \rho^{-(\ell+m+k)}-1  \right|<\dfrac{3(\rho-1)\cdot \rho^{m+k}}{d_1\cdot \rho^{\ell+m+k}}<\dfrac{3}{\rho^{\ell-1}}.
		\end{equation}
		Let 
		\begin{equation}\label{eq12}
			\Lambda_1:= \left( \dfrac{(\rho-1)a}{d_1}\right)\cdot \alpha^n\cdot \rho^{-(\ell+m+k)}-1.
		\end{equation}
		We then proceed to apply Theorem \ref{Matveev} on (\ref{eq12}). We have first to observe that $\Lambda_1\neq 0$. Indeed, if it were zero, we would then get that
		$$
		a\alpha^n=\dfrac{d_1}{\rho-1}\cdot \rho^{\ell+m+k}.
		$$
		In this case therefore, applying the automorphism $\sigma$ of the Galois group $\mathcal{G}$ on both sides of the preceeding equation and taking absolute values, we obtain 
		$$
		\left| \left(  \dfrac{d_1}{\rho-1}\cdot \rho^{\ell+m+k}\right)\right|=|\sigma(a\alpha^n)|=|c\gamma^n|<1,
		$$
		which is false. Thus, we have $\Lambda_1\neq 0$. Theorem \ref{Matveev} is then applied on (\ref{eq12}) with the following parameters : 
		$$
		\eta_1:=\dfrac{(\rho-1)a}{d_1},\, \eta_2:=\alpha,\, \eta_3:=\rho, \, b_1:=1,\, b_2:=n,\,b_3:=-\ell-m-k,\,s:=3.
		$$
		From Lemma \ref{lem2}, we have $\ell+m+k<n$. Consequently, we choose $B:=n$. Notice that $\mathbb{K}:=\mathbb{Q}(\eta_1,\eta_2,\eta_3)=\mathbb{Q}(\alpha)$, since $a=\dfrac{\alpha^2}{\alpha^3+2}$. Moreover its minimal polynomial over integers is $31x^3-3x-1.$ Therefore, $d_{\mathbb{K}}:=[\bK:\bQ]=3$. 
		
		\noindent Using the properties of the logarithmic height, we estimate $h(\eta_1)$ as follows : 
		\begin{align*}
			h(\eta_1)&=h\left( \dfrac{(\rho-1)a}{d_1} \right) \leqslant h(\rho-1)+h(a)+h(d_1)\\
			& \leqslant 2\log \rho+\dfrac{1}{3}\log 31 \leqslant 4\log \rho,
		\end{align*}
		which holds for $\rho \geqslant2.$ Similarly, we have $h(\eta_2)=h(\alpha)=\dfrac{\log \alpha}{3}$ and $h(\eta_3)=h(\rho)=\log\rho$. Therefore, we choose 
		$$
		A_1:=12\log\rho,\, A_2:=\log \alpha,\,\text{and}\, A_3:=3\log\rho.
		$$
		By Theorem \ref{Matveev}, we get 
		\begin{align*}
			\log |\Lambda_1|&> -1.4\cdot 30^6\cdot 3^{4.5}\cdot3^2\cdot (1+\log 3)\cdot(1+\log n)\cdot12\log\rho\cdot\log \alpha\cdot 3\log\rho\\
			&>- 3.73\cdot 10^{13}\cdot \log^2\rho\cdot (1+\log n)
		\end{align*}
		which when compared with (\ref{eq11}) gives
		$$
		(\ell-1) \log\rho-\log 3< 3.73\cdot 10^{13}\cdot \log^2\rho\cdot (1+\log n),
		$$
		leading to 
		\begin{equation}\label{eq13}
			\ell \log\rho< 3.74\cdot 10^{13}\cdot \log^2\rho\cdot (1+\log n).
		\end{equation}
\newline
	\end{proof}
	
	\subsubsection{Upper bound for $m \log\rho$ in terms of $n$}
	\begin{lemma}
		All solutions to (\ref{eqfondamentale1}) satisfy
		$$
		m\log\rho< 3.5\cdot 10^{26}\cdot\log^3\rho\cdot (1+\log n)^2.
		$$
	\end{lemma}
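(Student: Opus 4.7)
The plan is to mirror the proof of the previous lemma but push one more ``layer'' of the concatenation into the dominant linear form. Combining (\ref{eqfondamentale}) with Binet's formula (\ref{eq4}) and grouping the two leading $\rho$-powers, I would write
\[
(\rho-1)a\alpha^n - D\rho^{m+k} = -(\rho-1)\Pi(n) - (d_2-d_3)\rho^k - d_3, \qquad D := d_1\rho^\ell - (d_1-d_2).
\]
Using $n > 560$ (so that $|(\rho-1)\Pi(n)| < (\rho-1)\alpha^{-n/2}$ is negligible), the right-hand side is majorized in absolute value by $3(\rho-1)\rho^k$. Since $D \ge d_1\rho^\ell - (\rho-1) \ge \rho^{\ell-1}$ for $\rho \ge 2$ and $\ell \ge 1$, dividing both sides by $D\rho^{m+k}$ yields
\[
|\Lambda_2| < \frac{3}{\rho^{m-1}}, \qquad \Lambda_2 := \frac{(\rho-1)a}{D}\cdot \alpha^n \cdot \rho^{-(m+k)} - 1.
\]

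I would then verify that $\Lambda_2 \ne 0$ via the same Galois-conjugation trick that handled $\Lambda_1$: if $\Lambda_2 = 0$, applying the automorphism $\sigma$ that swaps $\alpha$ and $\beta$ to the identity $a\alpha^n = D\rho^{m+k}/(\rho-1)$ (whose right-hand side is rational and therefore $\sigma$-fixed) would force $|b\beta^n| = D\rho^{m+k}/(\rho-1) \ge 1$, contradicting $|b\beta^n| < 1$ since $|\beta| < 1$. Theorem \ref{Matveev} is then applied to $\Lambda_2$ with $s = 3$, parameters $\eta_1 := (\rho-1)a/D$, $\eta_2 := \alpha$, $\eta_3 := \rho$, exponents $b_1 := 1$, $b_2 := n$, $b_3 := -(m+k)$, and $B := n$ (permitted by Lemma \ref{lem2}). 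The splitting-field degree $d_{\bK} = 3$ is unchanged because $\eta_1 \in \bQ(\alpha)$.

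The only new ingredient is the height of $\eta_1$. Bounding $|D| \le d_1\rho^\ell + (\rho-1) \le \rho^{\ell+1}$ and using the rational-height formula for $(\rho-1)/D$, subadditivity of $h$ gives
\[
h(\eta_1) \le h\!\left(\tfrac{\rho-1}{D}\right) + h(a) \le (\ell+1)\log\rho + \tfrac{1}{3}\log 31,
\]
so I may take $A_1 := 3(\ell+1)\log\rho + \log 31$, $A_2 := \log\alpha$, $A_3 := 3\log\rho$. Substituting the previous lemma's estimate $\ell \log\rho < 3.74\cdot 10^{13}\log^2\rho\,(1+\log n)$ into $A_1$ and comparing the Matveev lower bound on $\log|\Lambda_2|$ with the upper bound $|\Lambda_2| < 3\rho^{-(m-1)}$ then produces the claimed inequality.

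The main obstacle is purely bookkeeping: the constant $3.5\cdot 10^{26}$ arises from the product of Matveev's universal factor $1.4\cdot 30^6\cdot 3^{4.5}\cdot 9\cdot(1+\log 3)$, the cubic quantity $A_1 A_2 A_3$ (where $A_1$ absorbs an extra $\log^2\rho\,(1+\log n)$ factor inherited from the preceding lemma), and the $(1+\log B)$ factor, and controlling all of these simultaneously is delicate. A minor secondary nuisance is the edge case $\ell = 1$, where $D \ge \rho^{\ell-1}$ degenerates to $D \ge 1$; this case is either verified directly or absorbed into the stated constant with negligible loss.
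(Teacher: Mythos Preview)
Your proposal is correct and follows essentially the same route as the paper: the same regrouping $D = d_1\rho^\ell - (d_1-d_2)$, the same Galois-conjugation argument for $\Lambda_2 \ne 0$, and the same Matveev application with $A_1$ absorbing the bound $\ell\log\rho < 3.74\cdot 10^{13}\log^2\rho\,(1+\log n)$ from the previous lemma. Your height estimate for $\eta_1$ via $h((\rho-1)/D) \le (\ell+1)\log\rho$ is slightly tidier than the paper's term-by-term expansion (which lands on $A_1 = 1.125\cdot 10^{14}\log^2\rho\,(1+\log n)$, matching your $3(\ell+1)\log\rho + \log 31$ after substitution), and the edge case $\ell = 1$ you flag is in fact harmless since $D \ge \rho-1$ already suffices for the bound $|\Lambda_2| < 3/\rho^{m-1}$.
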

	\begin{proof}
		Rewriting (\ref{eq4}), we obtain 
		$$
		(\rho-1)a\alpha^n-(d_1\cdot \rho^\ell-(d_1-d_2))\rho^{m+k}=-(\rho-1)\Pi(n)-(d_2-d_3)\cdot \rho^k-d_3,
		$$
		which shows that 
		\begin{align*}
			\left |(\rho-1)a\alpha^n-(d_1\cdot \rho^\ell-(d_1-d_2))\rho^{m+k} \right|&=\left| -(\rho-1)\Pi(n)-(d_2-d_3)\cdot \rho^k-d_3 \right|\\
			& \leqslant (\rho-1)\cdot \alpha^{-n/2}+(\rho-1)\cdot \rho^k+\rho-1\\
			&< 2\cdot \rho^{k+1}.
		\end{align*}
		Dividing both sides of the inequality by $(d_1\cdot \rho^\ell-(d_1-d_2))\rho^{m+k}$, we have that 
		\begin{equation}\label{eq14}
			\left| \left( \dfrac{(\rho-1)a}{d_1\cdot \rho^\ell-(d_1-d_2)} \right)\cdot\alpha^n\cdot \rho^{-m-k}-1\right|<\dfrac{2\cdot \rho^{k+1}}{(d_1\cdot \rho^\ell-(d_1-d_2))\rho^{m+k}}<\dfrac{2}{\rho^{m-1}}.
		\end{equation}
		Let 
		$$
		\Lambda_2:= \left( \dfrac{(\rho-1)a}{d_1\cdot \rho^\ell-(d_1-d_2)} \right)\cdot\alpha^n\cdot \rho^{-m-k}-1.
		$$
		Using similar argument as in $\Lambda_1$, we apply Theorem \ref{Matveev} on $\Lambda_2$. We notice that $\Lambda_2\neq 0$. If it were, then we would have that 
		$$
		a\alpha^n=\left( \dfrac{d_1\cdot \rho^\ell-(d_1-d_2)}{\rho-1}\right)\cdot \rho^{m+k}.
		$$
		Applying the automorphism $\sigma$ of the Galois group $\mathcal{G}$ on both sides, and taking the absolute values, we obtain 
		
		$$
		1<\left|\left( \dfrac{d_1\cdot \rho^\ell-(d_1-d_2)}{\rho-1}\right)\cdot \rho^{m+k}\right|=|\sigma(a\alpha^n)|=|c\gamma^n|<1,
		$$
		which is false. Therefore, $\Lambda_2\neq 0$. We then proceed to apply Theorem \ref{Matveev}with the following parameters: 
		$$
		\eta_1:=\dfrac{(\rho-1)a}{d_1\cdot \rho^\ell-(d_1-d_2)},\,\eta_2:=\alpha,\, \eta_3:=\rho,\, b_1:=1, \, b_2:=n,\, b_3:=-m-k,\, s=3.
		$$
		Since $m+k<n$, we take $B:=n$. Again taking $\bK:=\bQ(\eta_1,\eta_2,\eta_3)=\bQ(\alpha)$, we have that $d_\bK:=[\bK:\bQ]=3$.
		Next, we use the properties of the logarithmic height to estimate $h(\eta_1)$ as before, and obtain 
		\begin{align*}
			h(\eta_1)&=h\left(   \dfrac{(\rho-1)a}{d_1\cdot \rho^\ell-(d_1-d_2)}\right)\\
			& \leqslant h((\rho-1)a)+ h(d_1\cdot \rho^\ell-(d_1-d_2))\\
			& \leqslant h(\rho-1)+h(a)+\log 2+ h(d_1\cdot \rho^\ell)+h(d_1-d_2)\\
			& \leqslant \dfrac{1}{3}\log 31+ 4\log (\rho-1)+\ell \log\rho+2\log 2\\
			& \leqslant 3.74\cdot 10^{13}\cdot \log^2\rho\cdot (1+\log n)+ 4\log (\rho-1)+ \dfrac{1}{3}\log 31+ 2\log 2\\
			&< 3.75\cdot 10^{13}\cdot \log^2\rho\cdot (1+\log n),
		\end{align*}
		where we used the fact that $\ell \log\rho< 3.74\cdot 10^{13}\cdot \log^2\rho\cdot (1+\log n).$
		We then take 
		$$
		A_1:=1.125\cdot 10^{14}\cdot\log^2\rho\cdot (1+\log n),\, A_2:=\log \alpha,\, \text{and}\, A_3:=3\log\rho.
		$$
		Theorem \ref{Matveev} says that 
		\begin{align*}
			\log |\Lambda_2|&>-1.4\cdot 30^6\cdot 3^{4.5}\cdot 3^2\cdot (1+\log 3)\cdot(1+\log n)\cdot(1.125\cdot 10^{14}\cdot\log^2\rho\cdot \\ & \quad \quad (1+\log n))\cdot\log \alpha\cdot 3\log \rho\\
			&>- 3.49\cdot 10^{26}\cdot\log^3\rho\cdot (1+\log n)^2
		\end{align*}
		which when compared with (\ref{eq14}) gives
		$$
		(m-1)\log\rho-\log 2< 3.49\cdot 10^{26}\cdot\log^3\rho\cdot (1+\log n)^2,
		$$
		which simplifies to 
		\begin{equation}\label{eq20}
			m\log\rho< 3.5\cdot 10^{26}\cdot\log^3\rho\cdot (1+\log n)^2.
		\end{equation}
  \newline

	\end{proof}
	\subsubsection{Upper bound for $\ell+m+k$ and $n$}
	
	Rewriting (\ref{eq4}), we obtain 
	$$
	(\rho-1)a\alpha^n-\left(d_1\cdot \rho^{\ell+m}-(d_1-d_2)\cdot \rho^m-(d_2-d_3)\right)\cdot \rho^k= -(\rho-1)\Pi(n)-d_3,
	$$
	which shows that 
	\begin{align*}
		\left| (\rho-1)a\alpha^n-\left(d_1\cdot \rho^{\ell+m}-(d_1-d_2)\cdot \rho^m-(d_2-d_3)\right)\cdot \rho^k\right|& = |-(\rho-1)\Pi(n)-d_3|\\
		& \leqslant (\rho-1)\cdot \alpha^{-n/2}+(\rho-1)\\
  &<2(\rho-1).
	\end{align*}
	Dividing both sides of the inequality by $(\rho-1)a\alpha^n$, we have that 
	
	\begin{equation}\label{eq15}
		\left|  \left(  \dfrac{d_1\cdot \rho^{\ell+m}-(d_1-d_2)\cdot \rho^m-(d_2-d_3)}{(\rho-1)a}\right)\cdot \alpha^{-n}\cdot \rho^k-1\right|< \dfrac{2}{a\alpha^n}<\dfrac{5}{\alpha^n}.
	\end{equation}
	
	Now, we let 
	$$
	\Lambda_3:=\left(  \dfrac{d_1\cdot \rho^{\ell+m}-(d_1-d_2)\cdot \rho^m-(d_2-d_3)}{(\rho-1)a}\right)\cdot \alpha^{-n}\cdot \rho^k-1.
	$$
	Using similar arguments as in $\Lambda_1$ and $\Lambda_2$, we apply Theorem \ref{Matveev} on $\Lambda_3$. We notice that $\Lambda_3\neq 0.$ If it were, then we would have 
	$$
	a\alpha^n=\left(\dfrac{d_1\cdot \rho^{\ell+m}-(d_1-d_2)\cdot \rho^m-(d_2-d_3)}{\rho-1}\right)\cdot \rho^k.
	$$
	Applying the automorphism $\sigma$ of the Galois group $\mathcal{G}$ on both sides, and taking the absolute values, we obtain
	
	$$
	1<\left| \left(\dfrac{d_1\cdot \rho^{\ell+m}-(d_1-d_2)\cdot \rho^m-(d_2-d_3)}{\rho-1}\right)\cdot \rho^k \right|=|\sigma(a\alpha^n)|=|c\gamma^n|<1,
	$$
	which is false. Therefore, $\Lambda_3\neq 0$. We proceed to apply Theorem \ref{Matveev} with the following parameters :
 \begin{align*}
     &\eta_1:=\left(  \dfrac{d_1\cdot \rho^{\ell+m}-(d_1-d_2)\cdot \rho^m-(d_2-d_3)}{(\rho-1)a}\right),\,\eta_2:=\alpha,\, \eta_3:=\rho,\, \\ &b_1:=1,\, b_2:=-n,\, b_3:=k,\, s:=3.
 \end{align*}
	
	Since $k<n$, we take $B:=n$. Again, taking $\bK=\bQ(\eta_1,\eta_2,\eta_3)=\bQ(\alpha)$, we have that $d_\bK:=[\bK:\bQ]=3$. Nest, we use the properties of the logarithmic height to estimate $h(\eta_1)$ as before, and obtain:
	
	\begin{align*}
		h(\eta_1)&=h\left(  \dfrac{d_1\cdot \rho^{\ell+m}-(d_1-d_2)\cdot \rho^m-(d_2-d_3)}{(\rho-1)a}\right)\\
		& \leqslant h(d_1\cdot \rho^{\ell+m}-(d_1-d_2)\cdot \rho^m-(d_2-d_3))+ h((\rho-1)a)\\
		&  \leqslant h(d_1\cdot \rho^{\ell+m})+ h((d_1-d_2)\cdot \rho^m)+ h(d_2-d_3)+ \log 2+ h(\rho-1)+h(a)\\
		&  \leqslant h(d_1)+ (\ell+m)h(\rho)+ h(d_1-d_2)+mh(\rho) + h(d_2-d_3)+ \log 2+ \\
  & \qquad \quad  h(\rho-1)+h(a)\\
		& \leqslant  h(d_1)+ (\ell+m)h(\rho)+ h(d_1)+h(d_2)+ \log 2+ m h(\rho)+ h(d_2)+ h(d_3)+ \\ & \qquad \quad \log 2+ h(\rho-1)+h(a)\\ 
		& \leqslant 2 h(d_1)+ 2h(d_2)+ h(d_3)+ (2m+\ell) h(\rho)+h(\rho-1)+h(a)+ 2\log 2\\
		&< 6\log (\rho-1) + 2\times 3.5\cdot 10^{26}\cdot\log^3\rho\cdot (1+\log n)^2+ 3.74\cdot 10^{13}\cdot \log^2\rho\cdot \\
  & \qquad \quad  (1+\log n) +  \dfrac{1}{3}\log 31+ 2\log 2\\
		&< 1.4 \cdot 10^{27}\cdot\log^3\rho\cdot (1+\log n)^2.
	\end{align*}
	Note that in above estimate, we have used the estimates from \eqref{eq13} and \eqref{eq20}. We then take 
	$$
	A_1:=4.2\cdot 10^{27}\cdot \log^3\rho\cdot (1+\log n)^2,\, A_2:=\log \alpha,\, \text{and}\, A_3:=3\log \rho.
	$$
	Theorem \ref{Matveev} says that 
	\begin{align*}
		\log |\Lambda_3|& >-1.4\cdot 30^6\cdot 3^{4.5}\cdot 3^2\cdot (1+\log 3)\cdot(1+\log n)\cdot (4.2\cdot 10^{27}\cdot\log^3\rho\cdot (1+\log n)^2)\cdot \\ & \qquad \quad \log \alpha\cdot 3\log\rho\\
		&>- 1.31\cdot 10^{40}\cdot\log^4\rho\cdot (1+\log n)^3
	\end{align*}
	and comparison of this inequality with (\ref{eq15}) gives 
	$$
	n\log \alpha-\log 5<1.31\cdot 10^{40}\cdot\log^4\rho\cdot (1+\log n)^3,
	$$
	which satisfies to 
	$$
	n<2.75\cdot 10^{41}\cdot\log^4\rho\cdot \log^3 n,\quad \text{with}\; 1+\log n<2\log n.
	$$
	Next, we apply Lemma \ref{lem3} that enable us to find an upper bound of $n$, with following parameters 
	$$
	r:=3,\quad L:=n,\quad \text{and} \quad H:=2.75\cdot 10^{41}\cdot\log^4\rho.
	$$
	Therefore, we have 
	$$
	n< 2^3\cdot 2.75\cdot 10^{41}\cdot\log^4\rho\cdot (95.42+4\log\log \rho)^3,
	$$
	which leads to 
	$$
	n<5.6\cdot 10^{48}\cdot \log^7\rho.
	$$
	The above inequality holds because form $\rho\geqslant2,$ we have $95.42+4\log\log\rho<136\log\rho.$ By Lemma \ref{lem2}, we have that 
	$$
	\ell+m+k< n<5.6\cdot 10^{48}\cdot \log^7\rho.
	$$
	This completes the proof of Theorem~\ref{main}.

	\section{The study of the cases $2 \leqslant \rho \leqslant 10$}
	
	In this section, we explicitly determine all the Narayana numbers which are concatenations of three repdigits in base $\rho,$ with $\rho$ between $2$ and $10.$ So our result in this case is Theorem~\ref{Application}. 
	
	\begin{proof}[Proof of Theorem \ref{Application}]
		Then in this range Theorem~\ref{main} allows us to deduce that all solutions to Diophantine equation \eqref{eqfondamentale1} satisfy 
		$$
		\ell+m+k< n<2\times 10^{51}.
		$$
		The next step is therefore the reduction of the upper bound above in order to identify the real range in which the possible solutions of \eqref{eqfondamentale1} are found. For this, we proceed in three steps as follow.\\
		\textbf{Step~1:} Using (\ref{eq11}), let 
		$$
		\Gamma_1:=-\log(\Lambda_1+1)=(\ell+m+k)\log\rho-n\log \alpha-\log \left(  \dfrac{(\rho-1)a}{d_1}\right).
		$$
		Notice that (\ref{eq11}) is rewritten as 
		$$
		\left|  e^{-\Gamma_1}-1 \right|< \dfrac{3}{\rho^{\ell-1}}.
		$$
		Observe that $-\Gamma_1\neq 0$, since $ e^{-\Gamma_1}-1=\Lambda_1\neq 0$. Assume that $\ell \geqslant 4$, then 
		$$
		\left|  e^{-\Gamma_1}-1 \right|< \dfrac{3}{\rho^{\ell-1}}<\dfrac{1}{2}.
		$$
		Therefore, by Lemma \ref{lem4}, we have 
		$$
		\left|  \Gamma_1 \right|<\dfrac{6}{\rho^{\ell-1}}.
		$$
		Substituting $\Gamma_1$ in the above inequality with its value and dividing through by $\log \alpha$, we obtain 
		$$
		\left| (\ell+m+k)\left( \dfrac{\log \rho}{\log \alpha}\right)-n+ \left(  \dfrac{\log (d_1/(\rho-1)a)}{\log \alpha}\right) \right|<\dfrac{6/\log \alpha}{\rho^{\ell-1} }.
		$$ 
		Thus to apply Lemma \ref{Dujella} we can choose 
		$$
		\tau:=\dfrac{\log\rho}{\log \alpha},\quad \mu:=\dfrac{\log (d_1/(\rho-1)a)}{\log \alpha},\quad A:=\dfrac{6}{\log \alpha},\quad B:=\rho,
		$$
		 and $w:=\ell-1$ with $1 \leqslant d_1 \leqslant \rho-1$. Because $\ell+m+k<n<2\times 10^{51},$ we can take $M:=2\times 10^{51}$. So, for the remaining proof, we use \textit{Mathematica} to apply Lemma~\ref{Dujella}. For the computations, if the first convergent $q_t$ such that $q_t > 6M$ does not satisfy the condition $\varepsilon>0,$ then we use the next convergent until we find the one that satisfies the conditions. Thus, we have that 
		\begin{center}
			\begin{tabular}{|c|c|c|c|c|c|c|c|c|c|}
				\hline $\rho$ &2 & 3 & 4 & 5 & 6 & 7 & 8 & 9 &10 \\ \hline \hline 
				$q_t$  & $q_{115}$ & $q_{93}$ & $q_{107}$ & $q_{108}$ & $q_{86}$ & $q_{101}$ & $q_{107}$ & $q_{95}$   & $q_{93}$\\ \hline 
				$\varepsilon \geqslant$ &0.28 & 0.21 & 0.13 & 0.24& 0.03 & 0.03 & 0.02 & 0.05 &0.03\\ \hline
				$\ell-1 \leqslant$  & 183  & 114      & 92       & 78        & 70      & 64       & 61       & 57   &55     \\ \hline
			\end{tabular}
		\end{center}
		\vspace{3mm}
		Therefore,
		$$
		1 \leqslant\ell \leqslant \dfrac{\log((6/\log \alpha)\cdot q_{115}/0.28)}{\log 2}+1 \leqslant 184.
		$$ 
		\textbf{Step~2:} For the next step we have to reduce the upper bound on $m.$ To do this, let us consider
		$$
		\Gamma_2:=-\log (\Lambda_2+1)=(m+k)\log\rho-n\log\alpha+\log\left(\dfrac{d_1\cdot\rho^\ell-(d_1-d_2)}{(\rho-1)a}\right).
		$$
		Thus inequalities \eqref{eq14} become 
		$$
		\left|  e^{-\Gamma_2}-1 \right|< \dfrac{2}{\rho^{m-1}}<\dfrac{1}{2},
		$$
		which holds for $m\geqslant 3.$ It follows from Lemma~\ref{lem4} that 
		\begin{align}\label{Duj}
			\left| (m+k)\left( \dfrac{\log \rho}{\log \alpha}\right)-n+ \left(  \dfrac{\log ((d_1\cdot\rho^\ell-(d_1-d_2))/(\rho-1)a)}{\log \alpha}\right) \right|<\dfrac{4/\log \alpha}{\rho^{m-1} }.
		\end{align}
		Hence, since the conditions of Lemma \ref{Dujella} are satisfied, we may now apply it to inequality \eqref{Duj} with the following data
		$$
		\tau:=\dfrac{\log\rho}{\log \alpha},\quad \mu:=\dfrac{\log ((d_1\cdot\rho^\ell-(d_1-d_2))/(\rho-1)a)}{\log \alpha},\quad A:=\dfrac{4}{\log \alpha},\quad B:=\rho,\quad 
		$$
		and $w:=m-1$ with $1 \leqslant d_1, d_2 \leqslant \rho-1$ and $1 \leqslant \ell \leqslant 184.$ As $m+k< n<2\times 10^{51},$ we can take $M:=2\times 10^{51}.$ With Mathematica we got the following results
		\begin{center}
			\begin{tabular}{|c|c|c|c|c|c|c|c|c|c|}
				\hline $\rho$ &2 & 3 & 4 & 5 & 6 & 7 & 8 & 9 &10 \\ \hline \hline 
				$q_t$  & $q_{115}$ & $q_{93}$ & $q_{107}$ & $q_{108}$ & $q_{86}$ & $q_{101}$ & $q_{107}$ & $q_{95}$   & $q_{93}$\\ \hline 
				$\varepsilon \geqslant$ &0.0009 & 0.0001 & 0.001 & 0.0002& 0.0001 & 0.0006 & 0.0002 & $10^{-6}$ &0.00003\\ \hline
				$m-1  \leqslant$  & 191  & 120       & 95       & 82       & 73      & 66      & 63       & 61 & 58     \\ \hline
			\end{tabular}
		\end{center}
		\vspace{3mm}
		In all cases we can conclude that 
		$$
		1 \leqslant m \leqslant \dfrac{\log((4/\log \alpha)\cdot q_{115}/0.0009)}{\log\rho}+1 \leqslant 192.
		$$ 
		
		\textbf{Step~3:} Finaly, to reduce the bound on $n$ we have to choose
		$$
		\Gamma_3:=\log (\Lambda_3+1)=(m+k)\log\rho-n\log\alpha+\log\left(\dfrac{d_1\cdot\rho^{\ell+m}-(d_1-d_2)\cdot \rho^m-(d_2-d_3)}{(\rho-1)a}\right).
		$$
		By inequalities \eqref{eq15} we have that 
		$$
		\left|  e^{-\Gamma_3}-1 \right|< \dfrac{5}{\rho^n}<\dfrac{1}{2},
		$$
		which is valid for $n\geqslant 3$ and $\rho\geqslant2.$ It follows from Lemma~\ref{lem4} that 
		\begin{align}\label{Duj3}
			\left| k\left( \dfrac{\log \rho}{\log \alpha}\right)-n+   \cfrac{\log \left(\frac{d_1 \cdot \rho^{\ell+m} - (d_1 - d_2) \cdot \rho^m - (d_2 - d_3)}{(\rho - 1) a}\right)}{\log \alpha}, \right|<\dfrac{10/\log \alpha}{\rho^n }.
		\end{align}
		Now we have to apply Lemma~\ref{Dujella} to \eqref{Duj3} by taking the following parameters 
  \begin{align*}
      &\tau:=\dfrac{\log\rho}{\log \alpha},\quad \mu:=\frac{\log \left(\frac{d_1 \cdot \rho^{\ell+m} - (d_1 - d_2) \cdot \rho^m - (d_2 - d_3)}{(\rho - 1) a}\right)}{\log \alpha}, \quad A:=\dfrac{10}{\log \alpha},\quad B:=\rho
  \end{align*}
		
		and $w:=n$  with $1 \leqslant d_1, d_2 \leqslant \rho-1,$  $1 \leqslant \ell \leqslant 184$ and $1 \leqslant m \leqslant 183.$ Using the fact that $k< n<2\times 10^{51},$ we can take $M:=2\times 10^{51}$
		\begin{center}
			\begin{tabular}{|c|c|c|c|c|c|c|c|c|c|}
				\hline $\rho$ &2 & 3 & 4 & 5 & 6 & 7 & 8 & 9 &10 \\ \hline \hline 
				$q_t$  & $q_{115}$ & $q_{93}$ & $q_{107}$ & $q_{108}$ & $q_{86}$ & $q_{101}$ & $q_{107}$ & $q_{95}$   & $q_{93}$\\ \hline 
				$\varepsilon \ge$ &$ 10^{-6}$ & $10^{-7}$ & $10^{-6} $ & $10^{-7}$& $10^{-7}$ & $10^{-8}$ & $10^{-7}$ & $10^{-7}$ & $10^{-8}$\\ \hline
				$n  \leqslant$                    & 201  & 126       & 100       & 87        & 76    & 72       & 67      & 63   &64     \\ \hline
			\end{tabular}
		\end{center}
		\vspace{3mm}
		It follows from the above table that
		$$
		1 \leqslant n \leqslant \dfrac{\log((10/\log \alpha)\cdot q_{115}/10^{-6})}{\log 2} \leqslant 201,
		$$ 
		which is valid for all $\rho$ such that $2 \leqslant \rho \leqslant 10.$ In the light of the above results, we need to check equation \eqref{eqfondamentale1} in the cases $2 \leqslant \rho \leqslant 10$ for $1\le d_1,d_2,d_3\le 9$, $1 \le n \le 201,$ $1 \le k \le 201,$ $1\le \ell\le 184$ and $1\le m \le 192.$ A quick inspection using Maple reveals that Diophantine equation \eqref{eqfondamentale1} has only the solution listed in the statement of Theorem~\ref{Application}. This completes the proof of Theorem~\ref{Application}.
	\end{proof}

	\section{Concluding remarks}
	Specific applications would require a thorough mathematical and physical analysis. Researchers in the field of mathematical physics and interdisciplinary studies may find novel connections by exploring the distribution of repdigits or concatenated sequences in various physical phenomena. The exploration of these concepts contributes to the rich interplay between mathematics and the natural sciences.
	
	Linear recurrent sequences can be applied in various physical contexts, such as modeling population dynamics, heat diffusion, or electrical circuits. The key is to identify a relation where each term in the sequence is a linear combination of previous terms, which often arises naturally in the description of dynamic physical systems.
	
$^*$: Corresponding author \\ \\
$^1$: D\'epartement de Math\'ematiques, Facult\'e des Sciences et Techniques (FaST), Universit\'e de Kara, B.P. 404 Kara, Togo.\\ \\
$^2$: International Chair in Mathematical Physics and Applications ICMPA-UNESCO Chair,  University of Abomey-Calavi, 072 BP. 50 Cotonou, Benin Republic.\\ \\
$^3$: Laboratoire d’Algèbre, de Cryptologie, de G\'eom\'etrie Alg\'ebrique et Applications (LACGAA), Departement de Mathematiques et Informatique, Universit\'e Cheikh Anta Diop de Dakar (UCAD), B.P. 5005 Dakar-Fann, S\'en\'egal.\\ \\
$^4$ : Institute of Mathematics and Physics, Department of Mathematics, University of Abomey-Calavi, 072 BP. 50 Cotonou, Benin Republic.\\ \\
$^5$ : Departement de Physique, Universite de Lome, 01 BP 1515, Lomé, Togo. \\ \\
$^6$ : International Centre for Research and Advanced Studies in Mathematical \& Computer Sciences and Applications (ICRASMCSA), 072 B.P. 50 Cotonou, Benin Republic.\\ \\
Emails: pagdame.tiebekabe@ucaf.edu.sn, adedjnorb1988@gmail.com, npindra@univ-lome.tg, \\ norbert.hounkonnou@cipma.uac.bj/hounkonnou@yahoo.fr.

\end{document}